\def \n{\noindent}
\def \bs{\bigskip}
\def \l{\ell}
\def \th{\theta}
\def \S{\mathcal S}
\newtheorem {theorem}{Theorem}
\newtheorem {corol}{Corollary}
\newtheorem {consec}{Corollary}
\newtheorem {lemma}{Lemma}
\newcommand{\point}{\hspace{-1.75mm}{\bf.\ }}
\newcommand{\btheorem}{\begin{theorem}\point}
\newcommand{\etheorem}{\end{theorem}}
\newcommand{\blemma}{\begin{lemma}\point}
\newcommand{\elemma}{\end{lemma}}
\newcommand{\bpro}{\begin{pro}\point}
\newcommand{\epro}{\end{pro}}
\newcommand{\bcorol}{\begin{corol}\point}
\newcommand{\ecorol}{\end{corol}}
\newcommand{\bnote}{\begin{note}\point}
\newcommand{\enote}{\end{note}}
\newcommand{\bconsec}{\begin{consec}\point}
\newcommand{\econsec}{\end{consec}}
\newcommand{\bdefin}{\begin{defin}\point}
\newcommand{\edefin}{\end{defin}}
\newtheorem {pro}{Proposition}
\newtheorem{conj}{Conjecture}
\newcommand{\eqref}[1]{\ensuremath{(\ref{#1})}}
\begin{document}
\title{Extremal sizes of subspace partitions\thanks{This research project was supported by grant KAW 2005.0098 
from the Knut and Alice Wallenberg Foundation.}
\author{O. Heden, J. Lehmann, E. N\u{a}stase, and P. Sissokho}}
\maketitle
\begin{abstract}
A {\em subspace partition} $\Pi$ of $V=V(n,q)$ is a collection of subspaces 
of $V$ such that each $1$-dimensional subspace of $V$ is in exactly one
subspace of $\Pi$. The {\em size} of $\Pi$ is the number of its subspaces.
Let $\sigma_q(n,t)$ denote the {\em minimum size} of a subspace 
partition of $V$ in which the largest subspace has dimension $t$, and let 
$\rho_q(n,t)$ denote the {\em maximum size} of a subspace 
partition of $V$ in which the smallest subspace has dimension $t$.
In this paper, we determine the values of $\sigma_q(n,t)$ and $\rho_q(n,t)$ 
for all positive integers $n$ and $t$. Furthermore, we prove that if 
$n\geq 2t$, then the minimum size of a maximal partial $t$-spread in $V(n+t-1,q)$
is $\sigma_q(n,t)$. 
\end{abstract}

{\em Keywords.} {\small Subspace partition; Vector space partitions; Partial $t$-spreads.}
\section{Introduction}\label{sec:intro}
Let $V=V(n,q)$ denote a vector space of dimension $n$ over a finite field with $q$ elements. A {\em subspace partition} $\Pi$ of $V$ is a collection of subspaces 
of $V$ such that each $1$-dimensional subspace of $V$ is in exactly one
subspace of $\Pi$. A subspace partition $\Pi$ is also called a {\em vector space partition} 
(or simply a {\em partition}) of $V$. 
There is a rich literature about vector space partitions, see e.g. \cite{Be2,BESSSV,Bu,He0,Sp} and the 
references therein.

The {\em size} of $\Pi$ is the number of its subspaces.
Let $\sigma_q(n,t)$ denote the {\em minimum size} of a subspace 
partition of $V$ in which the largest subspace has dimension $t$, and let 
$\rho_q(n,t)$ denote the {\em maximum size} of a subspace 
partition of $V$ in which the smallest subspace has dimension $t$.
The purpose of this study is to find these numbers. Since $\sigma_q(n,n)=\rho_q(n,n)=1$, and $\sigma_q(n,1)=\rho_q(n,1)=(q^{n}-1)/(q-1)$, 
we will focus on the case $1<t<n$. Moreover, if $t$ divides $n$, then $\sigma_q(n,t)=\rho_q(n,t)$ 
is the size of a $t$-spread in $V$, i.e., a subspace partition of $V$ in which all the subspaces have dimension $t$.

We will prove the following theorem:
\begin{theorem}\label{thm:1}  
Let $n, k, t$, and $r$ be integers such that $0\leq r < t$, $k\geq 2$, and
$n=kt+r$. Then
\[\rho_q(n,t)=q^{t+r} \sum\limits_{i=0}^{k-2}q^{it}+1\;,
\]
and if furthermore $1\leq r<t$, then
\[\sigma_q(n,t)=q^{t+r} \sum\limits_{i=0}^{k-2}q^{it}+q^{\lceil \frac{t+r}{2}\rceil}+1\;.
\]
\end{theorem}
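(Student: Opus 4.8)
The plan is to treat both quantities through a single counting identity and then supply a matching construction in each case. Throughout, write $\theta_j=(q^j-1)/(q-1)$ for the number of points of $V(j,q)$, so that any partition with parts of dimensions $d_1,\dots,d_N$ satisfies the fundamental identity $\sum_{i=1}^N\theta_{d_i}=\theta_n$, equivalently $\sum_{i=1}^N q^{d_i}=q^n+N-1$. The two extremal problems then become: maximize $N$ subject to all $d_i\ge t$ (for $\rho$), and minimize $N$ subject to all $d_i\le t$ (for $\sigma$).

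For $\rho_q(n,t)$ both bounds are essentially arithmetic. Writing $d_i=t+e_i$ with $e_i\ge0$ and solving the identity for $N$ gives $N=(q^n-1-q^tF)/(q^t-1)$ with $F=\sum_{d_i>t}(q^{e_i}-1)\ge0$. Since $t\nmid n$ forbids a $t$-spread when $r\ge1$, we have $F>0$, and because $F\equiv q^r-1\pmod{q^t-1}$ with $0<q^r-1<q^t-1$ we get $F\ge q^r-1$; substituting yields $N\le q^{t+r}\sum_{i=0}^{k-2}q^{it}+1$ (the case $r=0$ is covered by $F=0$, giving the spread size). For the reverse inequality I would exhibit a partition attaining equality: using the building block that $V(s+t,q)$ splits into one $s$-subspace and $q^s$ subspaces of dimension $t$ (for $s\ge t$), peel off $t$-subspaces repeatedly, $V(kt+r)\to V((k-1)t+r)\to\cdots\to V(t+r)$, keeping the final $(t+r)$-subspace as a single part. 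This gives exactly $q^{t+r}\sum_{i=0}^{k-2}q^{it}$ parts of dimension $t$ and one of dimension $t+r$, so $\rho_q(n,t)$ equals the stated value.

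For $\sigma_q(n,t)$ the upper bound again comes from a construction: run the same peeling down to $V(t+r,q)$, but now partition this last space into subspaces of dimension $\le t$. Since $t<t+r<2t$, two $t$-subspaces of $V(t+r,q)$ necessarily meet, so the efficient choice is a $\lceil(t+r)/2\rceil$-near-spread: a $\mu$-spread when $t+r=2\mu$, or one $(\mu+1)$-subspace together with $q^{\mu+1}$ subspaces of dimension $\mu$ when $t+r=2\mu+1$ (again an instance of the building block), in both cases using $q^{\lceil(t+r)/2\rceil}+1$ parts. Adding the $q^{t+r}\sum_{i=0}^{k-2}q^{it}$ peeled $t$-subspaces yields a partition of the stated size.

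The matching lower bound for $\sigma_q(n,t)$ is the heart of the theorem and the step I expect to be the main obstacle, because here pure counting fails. Rewriting the identity as $N=(q^n-1+G)/(q^t-1)$ with $G=\sum_{d_i<t}(q^t-q^{d_i})\ge0$, the congruence argument only forces $G\ge q^t-q^r$, i.e.\ $N\ge\rho_q(n,t)+q^r$; this corresponds to a single deficient part of dimension $r$, a configuration that does not exist geometrically. To reach the true bound I would argue in two stages. First, a core estimate: any partition of $V(m,q)$ into proper subspaces has at least $q^{\lceil m/2\rceil}+1$ parts. This follows from a dichotomy on the maximum part dimension $d$: if $d\ge\lceil m/2\rceil$ then, as every other part is disjoint from a largest part $W$ and so has dimension $\le m-d$, counting the $\theta_m-\theta_d$ points outside $W$ forces at least $q^d$ further parts, giving $\ge q^d+1\ge q^{\lceil m/2\rceil}+1$; and if $d<\lceil m/2\rceil$ then all parts are small and already $N\ge\theta_m/\theta_d\ge q^{\lceil m/2\rceil}+1$. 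Applied to $V(t+r,q)$ this gives $\sigma_q(t+r,t)\ge q^{\lceil(t+r)/2\rceil}+1$. Second, a reduction accounting for the $q^{t+r}\sum q^{it}$ term: I would establish a recursive inequality $\sigma_q(n,t)\ge q^{\,n-t}+\sigma_q(n-t,t)$ and unroll it down to the core case. The delicate point, and the reason the Beutelspacher-type exponent $\lceil(t+r)/2\rceil$ appears rather than the naive $r$, is proving this reduction rigorously: passing from $V(n,q)$ to a lower-dimensional space (via a hyperplane section, or by removing a $t$-subspace of the partition) does not preserve partitions cleanly, so controlling how parts degenerate and ruling out the counting-optimal but geometrically impossible small-deficiency configurations is exactly where the partial $t$-spread machinery alluded to in the abstract must enter.
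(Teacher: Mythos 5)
Your constructions are exactly the paper's Proposition~\ref{pro:1} (iterated use of the Herzog--Sch\"onheim/Beutelspacher/Bu splitting lemma, Lemma~\ref{lem:Be}), and your treatment of $\rho_q(n,t)$ is correct: the identity $\sum_i\theta_{d_i}=\theta_n$ together with the congruence $F\equiv q^r-1\pmod{q^t-1}$ and $F>0$ does force $|\Pi|\le \ell q^t+1$. This is in fact a genuinely different, more elementary route than the paper's, which instead shows $|\Pi|\equiv 1\pmod{q^t}$ via the hyperplane-type identity of Lemma~\ref{HeLe1} and then derives a contradiction by counting points. Your proof of the core estimate $q^{\lceil m/2\rceil}+1$ (Beutelspacher's bound) is also sound, and you correctly diagnose that pure counting can yield only $\sigma_q(n,t)\ge \ell q^t+1+q^r$, which is too weak.

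However, the lower bound for $\sigma_q(n,t)$ --- the actual content of the theorem --- is not proved. Everything rests on the recursion $\sigma_q(n,t)\ge q^{n-t}+\sigma_q(n-t,t)$, which you explicitly concede you cannot establish; this is a genuine gap, not a routine detail. There is no evident way to produce from a minimum-size partition of $V(n,q)$ with largest part of dimension $t$ a partition of $V(n-t,q)$ while accounting for $q^{n-t}$ removed parts: sectioning by a subspace of codimension $t$ scrambles the dimensions of the parts, and deleting a $t$-dimensional part does not leave a vector space. The paper does not argue by any such reduction. Instead it works globally with the Heden--Lehmann hyperplane identities (Lemmas~\ref{HeLe2} and~\ref{HeLe1}), whose role is precisely to bound the number $m_t$ of $t$-dimensional parts rather than the total deficiency: in Lemma~\ref{lem:r=1}, identity (iv) of Lemma~\ref{HeLe2} is split over hyperplanes according to whether $b_t\ge\ell$, showing that either some hyperplane contains at least $\ell$ parts of dimension $t$ together with a part of dimension $d$ (whence $|\Pi|\ge\ell q^t+q^d+1$ follows at once from Lemma~\ref{HeLe1}), or else $m_t\le(\ell-1)q^t+q^d$; in the complementary case, where all non-$t$-parts have dimension below $\lceil(t+r)/2\rceil$, identity (ii) of Lemma~\ref{HeLe2} gives $m_t\le\ell q^t$. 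A point count over the remaining small parts then finishes each case. Some substitute for this hyperplane machinery --- a mechanism that controls $m_t$ itself --- is indispensable, and your proposal does not contain one.
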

This theorem improves a result of Beutelspacher~\cite{Be3} who in 1980 proved that
\[
\sigma_q(n,t)\geq q^{\lceil\frac{n}{2}\rceil}+1.
\]
We must also remark that the last two authors of this paper recently found the value of $\sigma_q(2t+1,t)$, see~\cite{NaSi}. They used some equations for subspace partitions derived by the first two authors in~\cite{HeLe}. Furthermore, our derivation of the value of $\sigma_q(n,t)$ uses arguments quite similar to those used in~\cite{NaSi}. 

After some preliminary results in Section \ref{sec:2}, we will prove our theorem in Section \ref{sec:3} and Section \ref{sec:4}. Finally, in Section \ref{sec:5}, we combine our result on $\sigma_q(n,t)$ with a construction of P. Govaerts~\cite{Go} to show that the minimum size of a maximal partial $t$-spread in $V(n+t-1,q)$ is $\sigma_q(n,t)$ for any integer $n\geq 2t$.

\section{Preliminary Results}\label{sec:2}
Let $\Pi$ be a subspace partition of $V=V(n,q)$, $n\geq2$,
with $m_i$ subspaces of dimension $i$, $1\leq i\leq n-1$.
Let $H$ be any hyperplane, i.e., any $(n-1)$-dimensional subspace of $V$, and let $b_i\leq m_i$ be the number of subspaces 
of $\Pi$ that are contained in $H$. We say that $(m_{n-1},\ldots,m_1)$
is the {\em type} of $\Pi$ and $b=(b_{n-1},\ldots,b_1)$ is the {\em type of the hyperplane} 
$H$ (with respect to $\Pi$). Let $s_b$ denote the number of hyperplanes in 
$V$ of type $b$ and define the set 
\[B=\{b:\; s_b>0\}.\]
For $1\leq i\leq n$, let 
\[\theta _i=\frac{q^i-1}{q-1}\] 
denote the number of $1$-dimensional subspaces in an $i$-space; then 
\[h_q(n,i)=\max\left\{0,\theta_{n-i}\right\}\]
denotes the number of hyperplanes containing a given $i$-dimensional subspace.
The following two lemmas were derived in \cite{HeLe}.
\begin{lemma}\label{HeLe2}
Let $\Pi$ be a subspace partition of $V=V(n,q)$ of type $(m_{n-1},\ldots,m_1)$ and let $b=(b_{n-1},\ldots,b_1)$ 
be the type of the hyperplane $H$ with respect to $\Pi$. Let $s_b$ denote the number of hyperplanes 
in $V$ with type $b$. Assume furthermore that $\Pi$ contains a subspace of dimension $d$ and a subspace of 
dimension $d'$, with $1\leq d,d'\leq n-2$. Then\\

\n $(i)$ $\sum\limits_{b\in B}s_b=\frac{q^{n}-1}{q-1}=h_q(n,0)$,

\n $(ii)$ $\sum\limits_{b\in B}b_ds_b=m_d h_q(n,d)$,

\n $(iii)$ $\sum\limits_{b\in B}{b_d\choose 2}s_b={m_d\choose 2}h_q(n,2d)$,

\n $(iv)$ $\sum\limits_{b\in B}b_db_{d'} s_b=m_d m_{d'}h_q(n,d+d')$.
\end{lemma}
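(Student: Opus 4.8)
The plan is to prove all four identities by double counting incidences between the subspaces of $\Pi$ and the hyperplanes of $V$, grouping the hyperplanes according to their type so that the data $(s_b)_{b\in B}$ appears naturally on one side of each equation. Part $(i)$ requires nothing about $\Pi$: since every hyperplane has exactly one type, $\sum_{b\in B}s_b$ simply counts all hyperplanes of $V(n,q)$. These are precisely the $(n-1)$-dimensional subspaces, so their number is $\theta_n=(q^n-1)/(q-1)$, which by definition equals $h_q(n,0)=\theta_{n-0}$.

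For $(ii)$ I would count the incident pairs $(W,H)$ where $W\in\Pi$ has dimension $d$ and $H$ is a hyperplane containing $W$. Counting by $H$ first: a hyperplane of type $b$ contains exactly $b_d$ subspaces of dimension $d$, so summing over all hyperplanes and regrouping by type gives $\sum_{b\in B}b_d s_b$. Counting by $W$ first: there are $m_d$ subspaces of dimension $d$ in $\Pi$, and the number of hyperplanes through a fixed $d$-dimensional subspace is $h_q(n,d)$, which comes from the quotient/duality argument — such hyperplanes correspond to the hyperplanes of $V/W\cong V(n-d,q)$, of which there are $\theta_{n-d}$ (the $\max\{0,\cdot\}$ only intervenes in degenerate dimension ranges). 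Equating the two counts yields $(ii)$.

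Identities $(iii)$ and $(iv)$ are the same double count applied to pairs of subspaces, and this is where the partition hypothesis does the real work. For $(iii)$ I would count pairs $(\{W_1,W_2\},H)$ with $W_1,W_2$ distinct $d$-dimensional members of $\Pi$ both contained in $H$; counting by hyperplane gives $\sum_{b\in B}{b_d\choose 2}s_b$, while counting by the unordered pair first requires knowing how many hyperplanes contain both $W_1$ and $W_2$. The key point is that distinct members of a subspace partition meet only in $0$, so $\dim(W_1+W_2)=2d$; since a hyperplane contains both $W_i$ if and only if it contains their sum, the number of such hyperplanes is $h_q(n,2d)$, and with ${m_d\choose 2}$ pairs in total this gives $(iii)$. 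Identity $(iv)$ is identical except that one chooses an ordered pair $(W,W')$ with $\dim W=d$ and $\dim W'=d'\neq d$; distinctness is automatic, $\dim(W+W')=d+d'$ again by trivial intersection, and the count becomes $m_d m_{d'}\,h_q(n,d+d')$.

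The main obstacle, such as it is, is conceptual rather than computational: one must verify that the partition property forces the spans $W_1+W_2$ and $W+W'$ to have the full expected dimensions $2d$ and $d+d'$, so that ``contains both subspaces'' is equivalent to ``contains a subspace of dimension $2d$ (resp.\ $d+d'$).'' Once that equivalence is in place, each identity reduces to equating two expressions for the same incidence count, and the edge cases where a span is too large to sit inside any hyperplane (i.e.\ $2d\geq n$ or $d+d'\geq n$) are absorbed automatically by the $\max\{0,\cdot\}$ in the definition of $h_q$, which returns $0$ in exactly those ranges.
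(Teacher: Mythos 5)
Your proof is correct. Note, however, that the paper itself gives no proof of this lemma: it is quoted verbatim from the Heden--Lehmann reference \cite{HeLe}, and your argument --- double counting incidences between hyperplanes (grouped by type) and single members, unordered pairs of members, and mixed pairs of members of $\Pi$, using the fact that distinct members of a subspace partition meet trivially so that their span has dimension $2d$ (resp.\ $d+d'$) --- is exactly the standard proof given there, including the observation that the $\max\{0,\cdot\}$ in $h_q$ handles the case where a span is too large to lie in any hyperplane. One small point worth making explicit: identity $(iv)$ must be read with $d\neq d'$, as you did; if $d=d'$ were allowed, $(iv)$ would assert $\sum_b b_d^2 s_b=m_d^2h_q(n,2d)$, which is inconsistent with the value $\sum_b b_d(b_d-1)s_b+\sum_b b_ds_b=m_d(m_d-1)h_q(n,2d)+m_dh_q(n,d)$ forced by $(ii)$ and $(iii)$.
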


\begin{lemma}\label{HeLe1}
Let $\Pi$ be a subspace partition of $V=V(n,q)$ and let $(b_{n-1},\ldots,b_1)$ be 
the type of the hyperplane $H$ with respect to $\Pi$. Then the number of subspaces in $\Pi$ is 
\[|\Pi|=1+\sum_{i=1}^{n-1}b_iq^{i}.\] 
\end{lemma}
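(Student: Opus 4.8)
The plan is to establish the identity by double counting the points (that is, the $1$-dimensional subspaces) of $V$, separating those that lie in the hyperplane $H$ from those that do not. The whole formula will drop out of two elementary point counts followed by a single subtraction, so I would set these up first and then combine them.

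First I would record the contribution of each member of $\Pi$ to the set of points lying outside $H$. Since $H$ has codimension $1$, a subspace $W\in\Pi$ of dimension $i$ either lies in $H$, contributing no point outside $H$, or meets $H$ in a hyperplane $W\cap H$ of $W$ of dimension $i-1$, in which case it contributes exactly $\theta_i-\theta_{i-1}=q^{i-1}$ points outside $H$. As $\Pi$ partitions the points of $V$ and there are $\theta_n-\theta_{n-1}=q^{n-1}$ points outside $H$, summing over all members gives
\[
\sum_{i=1}^{n-1}(m_i-b_i)\,q^{i-1}=q^{n-1},
\]
where $m_i$ is the number of $i$-dimensional members of $\Pi$ and $b_i$ is the number of those contained in $H$. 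Multiplying by $q$ yields $\sum_{i=1}^{n-1} m_i q^i-\sum_{i=1}^{n-1} b_i q^i=q^n$.

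Next I would use the partition property over all of $V$: since every point lies in exactly one member, $\sum_{i=1}^{n-1} m_i\theta_i=\theta_n$, and multiplying through by $q-1$ (using $(q-1)\theta_j=q^j-1$) together with $|\Pi|=\sum_{i=1}^{n-1} m_i$ rearranges this to $\sum_{i=1}^{n-1} m_i q^i=q^n-1+|\Pi|$. Comparing the two expressions obtained for $\sum_{i=1}^{n-1} m_i q^i$ gives $q^n+\sum_{i=1}^{n-1} b_i q^i=q^n-1+|\Pi|$, whence $|\Pi|=1+\sum_{i=1}^{n-1} b_i q^i$, as claimed.

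There is no serious obstacle here; the argument is a routine double count, as one expects for a lemma of this kind. The only points requiring care are the verification that a member not contained in $H$ meets $H$ in a subspace of dimension exactly one less, so that its number of points outside $H$ is precisely $q^{i-1}$, and the bookkeeping that turns the two point-count equations into the stated closed form. The one genuinely deliberate choice is to isolate the count of points \emph{outside} $H$: it is this weighting by $q^{i-1}$, rather than counting points inside $H$, that makes the terms $\sum_i m_i q^i$ cancel cleanly against the overall partition identity.
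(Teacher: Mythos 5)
Your proof is correct: the hyperplane-section fact ($W\not\subseteq H$ implies $\dim(W\cap H)=\dim W-1$, so $W$ has exactly $q^{i-1}$ points off $H$), the two point counts, and the final elimination of $\sum_i m_iq^i$ are all valid. Note that the paper itself gives no proof of this lemma---it is quoted from the Heden--Lehmann reference \cite{HeLe}---and your double count of the points outside $H$ against the global partition identity is precisely the standard argument for it, so there is nothing to add beyond the (harmless, and consistent with the paper's setup) implicit assumption that no member of $\Pi$ is $V$ itself.
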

We will also use the following lemma  due to Herzog and Sch\"onheim \cite{HeSc} and independently Beutelspacher~\cite{Be2} and Bu~\cite{Bu}.
\begin{lemma}\label{lem:Be}
Let $n$ and $d$ be integers such that $1\leq d \leq n/2$. Then $V=V(n,q)$ admits 
a partition with one subspace of dimension $n-d$ and $q^{n-d}$ subspaces 
of dimension $d$.
\end{lemma}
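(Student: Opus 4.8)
The plan is to prove the lemma by producing the partition explicitly, and then to verify it \emph{is} a partition by a short counting argument. First I would fix a subspace $W$ of dimension $n-d$ to play the role of the single large subspace, and then search for a family of exactly $q^{n-d}$ subspaces of dimension $d$, each meeting $W$ only in the zero vector and pairwise meeting only in the zero vector. Once such a family is in hand, a dimension count will force it (together with $W$) to exhaust every $1$-dimensional subspace, so no separate covering argument is needed.

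The construction I have in mind uses field multiplication. Since $d\leq n-d$, I would identify $V=V(n,q)$ with $X\times \F_{q^{n-d}}$, where $\F_{q^{n-d}}$ (the field with $q^{n-d}$ elements) is regarded as an $(n-d)$-dimensional vector space over $\F_q$ and $X$ is a fixed $d$-dimensional $\F_q$-subspace of $\F_{q^{n-d}}$. I set $W=\{0\}\times \F_{q^{n-d}}$, and for each $\lambda\in \F_{q^{n-d}}$ I define $U_\lambda=\{(x,\lambda x):x\in X\}$, the graph of multiplication by $\lambda$. Each $U_\lambda$ is an $\F_q$-subspace of dimension $d$, being the image of the injective $\F_q$-linear map $x\mapsto(x,\lambda x)$, and $U_\lambda\cap W=\{0\}$ since $(x,\lambda x)\in W$ forces $x=0$. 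There are exactly $q^{n-d}$ such subspaces, one per $\lambda$.

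The key step is pairwise disjointness. For $\lambda\neq\mu$, a common nonzero vector $(x,\lambda x)=(x,\mu x)$ would give $(\lambda-\mu)x=0$ with $\lambda-\mu\neq 0$; since multiplication by a nonzero element of $\F_{q^{n-d}}$ is a bijection of $\F_{q^{n-d}}$, this forces $x=0$, whence $U_\lambda\cap U_\mu=\{0\}$. This is precisely the point at which the hypothesis $d\leq n/2$ enters: it is what permits the embedding $X\hookrightarrow \F_{q^{n-d}}$ and hence the field-multiplication trick. (When $n=2d$ the construction degenerates to the Desarguesian $d$-spread of $V(2d,q)$, which is a useful sanity check.)

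Finally I would finish by counting. The subspaces $U_\lambda$ are pairwise disjoint off the origin and each disjoint from $W$, so they cover $q^{n-d}(q^d-1)$ distinct nonzero vectors, which equals $q^{n}-q^{n-d}$, exactly the number of nonzero vectors outside $W$. Hence every nonzero vector of $V$ lies in exactly one of the members $W,U_\lambda$ $(\lambda\in \F_{q^{n-d}})$, so every $1$-dimensional subspace lies in exactly one member, and $\{W\}\cup\{U_\lambda:\lambda\in \F_{q^{n-d}}\}$ is the required partition with one subspace of dimension $n-d$ and $q^{n-d}$ subspaces of dimension $d$. The only genuine obstacle is manufacturing $d$-subspaces with pairwise trivial intersection in the exact quantity $q^{n-d}$; the field-multiplication family resolves this, and everything else is bookkeeping.
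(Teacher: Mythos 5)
Your construction is correct, and it is worth noting that the paper itself gives no proof of this lemma at all: it is quoted as known, with references to Herzog--Sch\"onheim, Beutelspacher, and Bu. So your argument is a genuine, self-contained proof of a statement the paper only cites, and it is essentially the classical construction appearing in those references. Every step checks out: since $d\leq n-d$, a $d$-dimensional $\F_q$-subspace $X\subseteq \F_{q^{n-d}}$ exists, and identifying $V(n,q)$ with $X\times\F_{q^{n-d}}$ is legitimate because any two $n$-dimensional $\F_q$-spaces are isomorphic; each graph $U_\lambda=\{(x,\lambda x):x\in X\}$ is a $d$-dimensional subspace because $x\mapsto(x,\lambda x)$ is injective and $\F_q$-linear (multiplication by $\lambda$ being $\F_q$-linear on $\F_{q^{n-d}}$); the intersections $U_\lambda\cap W$ and $U_\lambda\cap U_\mu$ ($\lambda\neq\mu$) are trivial, the latter because $\lambda-\mu$ is invertible in the field; and the count
\[
\left(q^{n-d}-1\right)+q^{n-d}\left(q^{d}-1\right)=q^{n}-1
\]
shows that the pairwise disjoint sets $W\setminus\{0\}$ and $U_\lambda\setminus\{0\}$ exhaust all nonzero vectors, so each $1$-dimensional subspace lies in exactly one member of the partition. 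Your sanity check is also apt: for $n=2d$ the family $\{U_\lambda\}\cup\{W\}$ is precisely the Desarguesian $d$-spread of $V(2d,q)$, which is the case $\sigma_q(n,t)=\rho_q(n,t)=\theta_n/\theta_t$ mentioned in the paper's introduction.
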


For $n=kt+r$, $0\leq r<t$, and $k\geq 2$, let 
\begin{equation}\label{def:l}
\ell=q^r\sum\limits_{i=0}^{k-2}q^{it}.
\end{equation} 
The following proposition is an immediate consequence of Lemma \ref{lem:Be}.
\begin{pro}\label{pro:1}
Let $n, k, t$, and $r$ be integers such that $0\leq r < t$, $k\geq 2$, and
$n=kt+r$. Then $V=V(n,q)$ admits a partition $\Pi_m$ of size
\[
|\Pi_m|=\ell\cdot q^t+1,
\]
consisting of $\ell q^t$ subspaces of dimension $t$ and one subspace of dimension $t+r$.
If furthermore, $1\leq r<t$, then $V$ admits a partition $\Pi_M$ of size
\[
|\Pi_M|=\ell\cdot q^t+q^{\lceil \frac{t+r}{2}\rceil}+1\;,
\] 
consisting of $\ell q^t$ subspaces of dimension $t$, $q^{\lceil(t+r)/2\rceil}$ subspaces of dimension 
$\lfloor (t+r)/2\rfloor$ and one subspace of dimension $\lceil (t+r)/2\rceil$.
\end{pro}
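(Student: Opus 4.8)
The plan is to build both partitions by repeatedly invoking Lemma~\ref{lem:Be} to ``peel off'' subspaces of dimension $t$, each application lowering the dimension of the unpartitioned remainder by exactly $t$ until a subspace of dimension $t+r$ is left. The structural fact I will use throughout is that if $\{W\}\cup\mathcal{A}$ is a subspace partition of $V$ and $\mathcal{B}$ is a subspace partition of the single subspace $W$, then $\mathcal{B}\cup\mathcal{A}$ is again a subspace partition of $V$; this lets me refine one piece at a time without disturbing the others.

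First I would construct $\Pi_m$. Set $W_0=V$ and, for $j=1,\dots,k-1$, apply Lemma~\ref{lem:Be} with $d=t$ to the space $W_{j-1}=V((k-j+1)t+r,q)$, obtaining a partition of $W_{j-1}$ into one subspace $W_j$ of dimension $(k-j)t+r$ together with $q^{(k-j)t+r}$ subspaces of dimension $t$. I must check the hypothesis $t\le\dim(W_{j-1})/2$ at each step; this amounts to $k-j\ge 1-r/t$, which holds for all $j\le k-1$ since $0\le r<t$ and $k\ge 2$. After the final step the remainder $W_{k-1}$ has dimension $t+r$, and summing the number of created $t$-subspaces gives $\sum_{j=1}^{k-1}q^{(k-j)t+r}=q^r\sum_{i=1}^{k-1}q^{it}=\ell q^t$, using the definition \eqref{def:l} of $\ell$. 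Keeping $W_{k-1}$ as a single subspace yields the desired $\Pi_m$ of size $\ell q^t+1$.

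For $\Pi_M$ (where now $1\le r<t$, so $t\ge2$ and $t+r\ge3$), I would carry out the same peeling to reach $W_{k-1}$ of dimension $t+r$, but then split $W_{k-1}$ once more: applying Lemma~\ref{lem:Be} to $V(t+r,q)$ with $d=\lfloor(t+r)/2\rfloor$ (legal since $1\le\lfloor(t+r)/2\rfloor\le(t+r)/2$) produces $q^{\lceil(t+r)/2\rceil}$ subspaces of dimension $\lfloor(t+r)/2\rfloor$ and one subspace of dimension $\lceil(t+r)/2\rceil$. Merging this with the $\ell q^t$ subspaces of dimension $t$ gives exactly the stated partition $\Pi_M$ of size $\ell q^t+q^{\lceil(t+r)/2\rceil}+1$.

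The argument is essentially a bookkeeping exercise, so I expect no serious obstacle; the only point requiring care is verifying the inequality $1\le d\le n/2$ in Lemma~\ref{lem:Be} at every stage. The tightest case is the last peeling step when $r=0$, where $\dim(W_{k-2})=2t$ and $d=t=\dim(W_{k-2})/2$ attains the bound with equality (still permitted). One must also confirm that the recursion terminates exactly when the remainder reaches dimension $t+r$ and that the geometric sum telescopes to $\ell q^t$.
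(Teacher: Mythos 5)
Your proposal is correct and is exactly the argument the paper intends: the paper states Proposition~\ref{pro:1} as an immediate consequence of Lemma~\ref{lem:Be}, and your iterated ``peeling'' of $t$-dimensional subspaces (with the final extra split of the $(t+r)$-dimensional remainder for $\Pi_M$) is precisely that consequence spelled out, including the correct verification of the hypothesis $1\le d\le n/2$ at each stage and the telescoping count $q^r\sum_{i=1}^{k-1}q^{it}=\ell q^t$.
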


We close this section by giving three relations that  will be frequently used. They follow easily from the definitions of $\ell$ and the function $\theta_i$; the third is an immediate consequence of the first two:
\begin{equation}\label{rel1}
\theta_{n-t}-\theta_r=\l\theta_t,
\end{equation}
\begin{equation}\label{rel2}
\theta_{a+b}-\theta_b=q^b\theta_{a}\;,
\end{equation}
\begin{equation}\label{eq:4}
\theta_n-\ell q^t\theta_t=\theta_{t+r}\;.
\end{equation}

\section{The minimum size}\label{sec:3}

In this section we will find $\sigma_q(n,t)$, as indicated in Theorem \ref{thm:1}.
We will need the following lemma, which may be of independent interest.

\begin{lemma}\label{lem:r=1}  
Let $n, k, t$, and $r$ be integers such that $k \geq 2, 1\leq r<t$, and
$n=kt+r$. Let $\Pi$ be a subspace partition of $V=V(n,q)$ with 
no subspace of dimension higher than $t$. Assume furthermore that $\Pi$ contains 
a subspace of dimension $t$ and a subspace of dimension $d$, with $0\leq d <t$. 
Then
\[|\Pi|\geq q^{t+r}\sum\limits_{i=0}^{k-2}q^{it}+q^{d}+1.\]
\end{lemma}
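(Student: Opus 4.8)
The plan is to convert the covering condition into a single scalar inequality and then to isolate where the difficulty really lies. Writing $m_i$ for the number of $i$-dimensional members of $\Pi$, the hypothesis that no member has dimension exceeding $t$ together with the fact that the members cover every point of $V$ exactly once gives $\sum_{i=1}^{t}m_i\theta_i=\theta_n$. Multiplying the trivial identity $|\Pi|=\sum_i m_i$ by $\theta_t$, subtracting the covering identity, and then applying $\theta_t-\theta_i=q^i\theta_{t-i}$ (relation \eqref{rel2}) and $\theta_n=\ell q^t\theta_t+\theta_{t+r}$ (relation \eqref{eq:4}), I obtain
\[
\theta_t\bigl(|\Pi|-\ell q^t\bigr)=\theta_{t+r}+\sum_{i=1}^{t-1}m_i\,q^i\theta_{t-i}.
\]
Since $\theta_{t+r}-\theta_t=q^t\theta_r$ by \eqref{rel2} and $\theta_t>0$, the target bound $|\Pi|\ge\ell q^t+q^d+1$ is \emph{equivalent} to
\[
\sum_{i=1}^{t-1}m_i\,q^i\theta_{t-i}\ \ge\ q^d\theta_t-q^t\theta_r .
\]
Thus all of the content now sits in how much ``mass'' the members of dimension strictly below $t$ are forced to carry.

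The easy regime is $d\le r$. Here I discard all terms on the left except the one guaranteed by hypothesis, namely $m_d\ge1$, so that $\sum_{i<t}m_iq^i\theta_{t-i}\ge q^d\theta_{t-d}$. Using $\theta_t-\theta_{t-d}=q^{t-d}\theta_d$ (again \eqref{rel2}), the required inequality collapses to $q^t\theta_r\ge q^t\theta_d$, i.e. $\theta_r\ge\theta_d$, which holds precisely because $d\le r$; the case $d=0$ is in fact immediate, since then the right-hand side $q^d\theta_t-q^t\theta_r$ is already non-positive. So for $d\le r$ the mere presence of a single $t$-space and a single $d$-space suffices.

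The hard case, and the main obstacle, is $d>r$. Now keeping only $m_d\ge1$ leaves a deficit of exactly $q^t(\theta_d-\theta_r)>0$, so I must prove that $\Pi$ is forced to contain many further low-dimensional members; that such multiplicity is genuinely necessary is visible already in the extremal partition $\Pi_M$ of Proposition~\ref{pro:1}, where the deficit is absorbed by its $q^{\lceil(t+r)/2\rceil}$ members of dimension $\lfloor(t+r)/2\rfloor$. To force these members I will pass to the hyperplane equations. Lemma~\ref{HeLe1} makes the sum $\sum_i b_i(H)q^i=|\Pi|-1$ independent of the hyperplane $H$, which immediately caps $b_t(H)\le(|\Pi|-1)/q^t$ uniformly and, more importantly, pins the low-order behaviour of $|\Pi|-1$ modulo powers of $q$ to the small entries $b_i$ of the type vector; feeding this into the first- and second-moment identities of Lemma~\ref{HeLe2}(ii),(iii) (legitimate with $d=t$ since $n\ge 2t$) and into the cross-term identity \ref{HeLe2}(iv) relating the $d$-space $U$ to the $t$-spaces then constrains how the members can be distributed among hyperplanes. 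The delicate step I expect to be the crux is the quantitative balancing of these type-vector constraints against the covering deficit so that the left-hand side of the displayed inequality actually reaches $q^d\theta_t-q^t\theta_r$; this is an extremal counting argument on the admissible types $b$, carried out in exactly the spirit of the determination of $\sigma_q(2t+1,t)$ in \cite{NaSi}, and it is there that the real work will lie.
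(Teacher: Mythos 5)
Your reformulation and your easy case are correct: the identity $\theta_t\bigl(|\Pi|-\ell q^t\bigr)=\theta_{t+r}+\sum_{i=1}^{t-1}m_iq^i\theta_{t-i}$ does follow from the point count $\sum_i m_i\theta_i=\theta_n$ together with \eqref{rel2} and \eqref{eq:4}, the stated equivalence of the target with $\sum_{i=1}^{t-1}m_iq^i\theta_{t-i}\geq q^d\theta_t-q^t\theta_r$ is valid, and for $d\leq r$ the single term $m_d\geq 1$ indeed suffices. But the lemma's content lies almost entirely in the case $d>r$ (in its application inside Theorem~\ref{thm:1} one has $d\geq\lceil (t+r)/2\rceil>r$), and there your proposal stops at a declaration of intent: you name the tools (Lemma~\ref{HeLe1}, Lemma~\ref{HeLe2}) and state that ``the real work will lie'' in an extremal counting over hyperplane types, but you never produce that counting. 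This is a genuine gap, not a routine verification: nothing you have written forces $\Pi$ to contain more than one subspace of dimension below $t$, which is exactly what the inequality demands when $d>r$.

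The missing idea --- and the way the paper closes the argument, uniformly in $d$, with no split on $d$ versus $r$ --- is a dichotomy on hyperplane types extracted from the cross-term identity. Since $m_t,m_d>0$, Lemma~\ref{HeLe2}(iv) gives $\sum_{b\in B}b_tb_ds_b=m_tm_d\theta_{n-t-d}\neq 0$. Either some type $b$ with $s_b>0$ has \emph{simultaneously} $b_t\geq\ell$ and $b_d\geq 1$, in which case Lemma~\ref{HeLe1} yields at once $|\Pi|=1+\sum_ib_iq^i\geq \ell q^t+q^d+1$ and no counting is needed; or else every type with $s_b>0$ and $b_d\geq 1$ has $b_t\leq\ell-1$, whence
\[
m_tm_d\theta_{n-t-d}=\sum_{b\in B}b_tb_ds_b\leq(\ell-1)\sum_{b\in B}b_ds_b=(\ell-1)m_d\theta_{n-d}
\]
by Lemma~\ref{HeLe2}(ii), which after simplification via \eqref{rel1}, \eqref{rel2}, \eqref{eq:4} gives the explicit cap $m_t\leq(\ell-1)q^t+q^d$. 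That cap is precisely the lower bound on low-dimensional mass your reformulation is waiting for: feeding it into $|\Pi|\geq m_t+(\theta_n-m_t\theta_t)/\theta_{t-1}$ (each member of dimension below $t$ covers at most $\theta_{t-1}$ points, and this expression is decreasing in $m_t$) produces $|\Pi|\geq\ell q^t+q^d+1$ after a short computation. Note that the ingredients you proposed to lean on --- the uniform cap $b_t\leq(|\Pi|-1)/q^t$ from Lemma~\ref{HeLe1} and the second moment, Lemma~\ref{HeLe2}(iii) --- are not what drives the proof; the force comes from coupling $b_t$ with $b_d$ inside the cross term, and without that coupling your hard case does not go through.
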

\begin{proof}
Let $\Pi$ be a subspace partition of $V$ containing subspaces of dimension $t$ and $d$ 
with $t>d$. 
Since there exist subspaces of dimensions $t$ and $d$ in $\Pi$, 
we have $m_t>0$ and $m_d>0$. So it follows from Lemma~\ref{HeLe2}(iv) that 
\begin{eqnarray}\label{eqa5}
\sum_{b\in B}b_tb_ds_b=m_tm_d\theta_{n-t-d}\not=0.
\end{eqnarray}
Additionally,
\begin{eqnarray*}\label{eqa6}
\sum_{b\in B}b_tb_ds_b=\sum_{b\in B\atop 0\leq b_t\leq\l-1}b_tb_ds_b+
\sum_{b\in B\atop b_t\geq \l}b_tb_ds_b.
\end{eqnarray*}
If 
\[\sum\limits_{b\in B,\;b_t\geq \l} b_tb_ds_b\not=0,\]
then there exists $b\in B$ such that  
$b_t\geq \l $, $b_d\geq 1$, and $s_b\geq1$. In this case, Lemma~\ref{HeLe1} yields 
\begin{eqnarray*}\label{eqM}
\quad |\Pi|=\sum_{i=1}^{n-1}b_iq^{i}+1\geq b_tq^{t}+b_dq^{d}+1\geq\l\;q^{t}+q^{d}+1,
\end{eqnarray*}
and the lemma follows. So we may assume that $\sum\limits_{b\in B,\; b_t\geq \l }b_tb_ds_b=0$. 
This assumption, combined with \eqref{eqa5} and Lemma~\ref{HeLe2}(iv), yields  
\begin{eqnarray}\label{eqa7}
(\l-1)m_d\theta_{n-d}
&=&\sum_{b\in B}(\l-1)\cdot b_ds_b\cr
&=&\sum_{b\in B\atop 0\leq b_t\leq \l-1}(\l-1)\cdot b_ds_b+\sum_{b\in B\atop b_t\geq \l}(\l-1)\cdot b_ds_b\cr
&\geq&\sum_{b\in B\atop 0\leq b_t\leq \l-1}b_t\cdot b_ds_b+0\cr
&=&\sum_{b\in B\atop 0\leq b_t\leq \l-1}b_t\cdot b_ds_b+\sum_{b\in B\atop b_t\geq \l }b_t\cdot b_ds_b\cr
&=&\sum_{b\in B}b_tb_ds_b\cr
&=&m_tm_d\theta_{n-t-d}
\end{eqnarray}
Since $m_d>0$, dividing both sides of~\eqref{eqa7} by $m_d$ yields
\[ 
m_t \leq \frac{(\l-1)\;\theta_{n-d}}{\theta_{n-t-d}}.
\]
We now show that this implies that 
\begin{equation}\label{eqa8}
m_t\leq(\ell-1)q^t+q^d\;. 
\end{equation}

From \eqref{rel2} we obtain that $\theta_{n-d}=\theta_t+q^t\theta_{n-d-t}$, and hence it remains to prove that
\[
\frac{(\ell-1)\theta_t}{\theta_{n-d-t}}\leq q^d\;.
\]
This fact follows from Equations~\eqref{rel1},~\eqref{rel2} and~\eqref{eq:4}:
\[
q^d\theta_{n-d-t}-\ell\theta_t+\theta_t=\theta_{n-t}-\theta_d-\theta_{n-t}+\theta_r+\theta_t=\theta_t+\theta_r-\theta_d\;,
\]
as $\theta_t>\theta_d$.

Note that $\Pi$ is the disjoint union of ${\mathcal A}=\{W\in \Pi:\; \dim(W)=t\}$ and 
${\mathcal B}=\{W\in \Pi:\; \dim(W)\leq t-1\}$.  By counting the 1-dimensional subspaces not taken up by
$\mathcal{A}$, we can bound the size of $\mathcal{B}$ by
\[ |\mathcal{B}|\geq \frac{\theta _{n}-|\mathcal{A}|\cdot \theta _{t}}{\theta _{t-1}}\;.\]
Since $|{\mathcal A}|=m_t$, we obtain from~\eqref{eqa8} that
\begin{equation}\label{eqa1}
|\Pi|=|{\mathcal A}|+|{\mathcal B}|
\geq m_t+\frac{\theta _{n}-m_t\cdot \theta _{t}}{\theta _{t-1}}  \geq\frac{\theta_n-(\l\,q^{t}-q^{t}+q^{d})(\theta_{t}-\theta_{t-1})}{\theta_{t-1}}\;.\end{equation}
By using Equation~\eqref{eq:4}, the above inequality can be further simplified
\[
|\Pi|\geq
\ell q^t+q^d+\frac{\theta_{t+r}+q^t(\theta_{t}-\theta_{t-1})-q^d\theta_{t}}{\theta_{t-1}}>\ell q^t+q^d+\frac{q^t(\theta_{t}-\theta_{t-1})-q^d\theta_{t}}{\theta_{t-1}}\;.
\]
As furthermore,
\[
q^t(\theta_{t}-\theta_{t-1})=q^{2t-1}>q^d\theta_t\;,
\]
we finally obtain 
\[|\Pi|\geq \l\,q^{t}+q^{d}+1.\]
This concludes the proof of the lemma.
\end{proof}
\medskip

We now prove that under the assumptions of Theorem~\ref{thm:1}, $\sigma_q(n,t)= \l\,q^{t}+q^{\lceil \frac{t+r}{2}\rceil}+1$.

\medskip
\begin{proof}
Let $\Pi$ be a subspace partition of $V=V(n,q)$ in which the largest subspace has dimension 
$t$. Let $\beta=\lceil (t+r)/2\rceil$. If there is a subspace of dimension $d$ in $\Pi$ with $\beta\leq d<t$, then by Lemma~\ref{lem:r=1} 
\begin{equation}\label{eqb11}
|\Pi|\geq \l\,q^{t}+q^{d}+1\geq  \l\,q^{t}+q^{\beta}+1.
\end{equation}
It remains to consider the case where every subspace in $\Pi$ has either dimension $t$ or a dimension less than or equal to $\beta-1$. 

If there exists a hyperplane $H$ of type $b$ with $b_t \geq \l+1,$ then by Lemma \ref{HeLe1}
\begin{equation}\label{eqb22}
|\Pi|=\sum_{i=1}^{n-1}b_iq^{i}+1\geq(\l+1)q^{t}+1\geq\l\,q^{t}+q^{\beta}+1,
\end{equation}
where the last inequality holds since $\beta\leq t$.

So now assume that if $s_b\neq0$ then $b_t\leq \l$. Then 
Lemma~\ref{HeLe2}(ii) yields
\begin{equation}\label{eqb33}
m_t\theta_{n-t}=\sum_{b\in B} b_ts_b\leq\l\cdot\sum_{b\in B}s_b=\l\cdot\theta_n\;.
\end{equation}

From \eqref{rel1}, we derive $\ell\theta_t<\theta_{n-t}$. By combining this inequality with~\eqref{rel2},
we obtain
\[
\ell\theta_n=\ell(q^t\theta_{n-t}+\theta_t)=\ell q^t\theta_{n-t}+\ell\theta_t<\ell q^t\theta_{n-t}+\theta_{n-t}\;.
\]
Consequently, \eqref{eqb33} yields
\begin{equation}\label{eqb34}
m_t<\ell q^t +1\;.
\end{equation}
Note that $\Pi$ is the disjoint union of ${\mathcal A}=\{W\in \Pi:\; \dim(W)=t\}$ and 
${\mathcal B}=\{W\in \Pi:\; \dim(W)\leq \beta-1\}$. 
By Equation~\eqref{eqb34} and since $m_t$ is an integer, we may assume that $m_t\leq \l\,q^{t}$. So by using 
Equation~\eqref{rel2}, we obtain that 
\[
|\Pi|= |{\mathcal A}|+|{\mathcal B}|\, \geq \, m_t+\frac{\th_{n}-m_t\cdot \th_{t}}{\th_{\beta-1}}=\frac{\th_{n}-m_t(\th_{t}-\th_{\beta-1})}{\th_{\beta-1}}
\geq\frac{\th_{n}-\ell q^t(\th_{t}-\th_{\beta-1})}{\th_{\beta-1}},
\]
and hence from \eqref{eq:4}, and the fact that $\theta_{\beta-1}(q^\beta+1) \leq \theta_{t+r}$, we conclude that
\begin{equation}\label{eqb44}
|\Pi|\geq \ell q^t+\frac{\theta_n-\ell q^t\theta_t}{\theta_{\beta-1}}=\ell q^t+\frac{\theta_{t+r}}{\theta_{\beta-1}}
\geq\ell q^t+q^\beta+1\;.
\end{equation}
Summarizing the distinct cases we have considered, we thus obtain 
\begin{equation}\label{eqb4.11}
\sigma_q(n,t)\geq \l\,q^{t}+q^{\beta}+1.
\end{equation}
Finally, by using Proposition \ref{pro:1} we may conclude that
\[
\sigma_q(n,t)= \l\,q^{t}+q^{\beta}+1.
\] 
\end{proof}
%

\section{The maximum size}\label{sec:4}
In this section we now prove that under the assumptions of Theorem~\ref{thm:1}, $\rho_q(n,t)= \l\,q^{t}+1$.

\medskip
\begin{proof}
Let $\Pi$ be a subspace partition of $V=V(n,q)$ in which the smallest subspace has dimension $t$. Suppose $|\Pi| >\l\,q^{t}+1$. The type of $\Pi$ is $(m_{n-1},\ldots,m_t,0,\ldots,0)$. Let $H$ be any hyperplane
of $V$, and let $(b_{n-1},\ldots,b_t,0,\ldots,0)$ be the type of $H$ with respect to $\Pi$. 
Then by Lemma~\ref{HeLe1}, we have 
\[|\Pi|=1+\sum_{i=t}^{n-1}b_iq^{i}=1+q^t\sum_{i=t}^{n-1}b_iq^{i-t}.\]
Thus, $|\Pi|\equiv 1\pmod{q^t}$, and by our above assumption on $|\Pi|$, we have $|\Pi| \geq \l\,q^{t}+q^t+1$.
As the dimension of each member of $\Pi$ is at least $t$, we may use relation \eqref{eq:4} and the fact that $(q^t+1)\theta_t=\theta_{2t}$ to conclude that
\[
\theta_n \geq |\Pi|\,\theta_t\geq(\ell q^t+q^t+1)\theta_t=\theta_n-\theta_{t+r}+\theta_{2t}\;,
\]
which is a contradiction as $\theta_{2t} > \theta_{t+r}$.  Thus $|\Pi| \leq \l\,q^{t}+1$. Since $\Pi$ is an arbitrary partition,  
we obtain
\begin{equation}\label{eqc.2}
\rho_q(n,t) \leq \l\,q^{t}+1.
\end{equation}
Hence, from Proposition \ref{pro:1} now follows that
\[ \rho_q(n,t) = \ell\,q^{t}+1.\]
\end{proof}
%
\section{Application to maximal partial \protect\boldmath $t$-spreads}\label{sec:5}
A {\em partial $t$-spread} of $V=V(n,q)$ is a collection $\S=\{W_1,\ldots,W_k\}$ 
of $t$-dimensional subspaces of $V$ such that $W_i\cap W_j=\{0\}$ for $i\not=j$. 
The {\em size} of $\S$ is its cardinality $|\S|$.
If $V=\bigcup_{W\in\S}W$, then $\S$ is called a $t$-{\em spread}. 
A  partial $t$-spread is called {\em maximal} if it cannot be extended to a larger one. 
Maximal partial $t$-spreads have been extensively studied, see e.g.~\cite{BBW,EiStSz,GaSz,Go,He1,HoPa,JuSt}.
They can be used to construct error-correcting codes~\cite{ClDu,DrFr}, orthogonal arrays~\cite{CoGe,EJSSS}, 
and recently factorial designs~\cite{RBD}.

We let $\tau_q(n,t)$ denote the {\em minimum number} of subspaces in any maximal partial $t$-spread 
of $V(n,q)$. A maximal partial $t$-spread $\S$ of $V(n,q)$ such that $|\S|=\tau_q(n,t)$, 
is called a {\em minimum size} maximal partial $t$-spread. 
Let $n$ and $t$ be fixed integers and let $k$ and $r$ be the unique integers defined by 
$n=kt+r$ and $0\leq r<t$. Beutelspacher~\cite{Be2} showed that if $r=0$ and $k\geq 2$, then
\[
\tau_q(n+t-1,t)=\sigma_q(n,t)=\frac{q^{kt}-1}{q^t-1}.
\]
For $r>0$, P. Govaerts~\cite{Go} proved several results related to the number $\tau_q(n+t-1,t)$.
In particular, he provided the following upper bound for $\tau_q(n+t-1,t)$.
\begin{lemma}[Govaerts~\cite{Go}]\label{lem:Go}
Let $n$ and $t>1$ be integers such that $n\geq 2t$. 
Then there exist $($see~page 610 in~\cite{Go} for a construction$)$ maximal partial 
$t$-spreads of $V(n+t-1,q)$ of size $\sigma_q(n,t)$. Consequently, $\tau_q(n+t-1,t)\leq \sigma_q(n,t)$.
\end{lemma}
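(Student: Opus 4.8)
The plan is to produce, inside $W=V(n+t-1,q)$, an explicit partial $t$-spread $\mathcal S$ with exactly $\sigma_q(n,t)=\ell q^{t}+q^{\lceil(t+r)/2\rceil}+1$ members and then to verify that it cannot be enlarged. The key reformulation is the \emph{hole criterion}: if $H=W\setminus\bigcup_{S\in\mathcal S}S$ denotes the set of points (together with $0$) missed by $\mathcal S$, then a $t$-subspace $T$ can be adjoined to $\mathcal S$ precisely when $T\setminus\{0\}\subseteq H$; hence $\mathcal S$ is maximal if and only if $H$ contains no $t$-dimensional subspace. So the problem splits into two tasks: assemble $\sigma_q(n,t)$ pairwise trivially-intersecting $t$-subspaces, and arrange their holes so that no $t$-space survives uncovered.

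To build $\mathcal S$ I would use a cone (linear-representation) construction over a fixed $(t-1)$-dimensional subspace $U\subseteq W$, with quotient map $\pi\colon W\to W/U\cong V(n,q)$. Start from the extremal partition $\Pi_M$ of $V(n,q)$ furnished by Proposition~\ref{pro:1}, which has precisely $\sigma_q(n,t)$ parts: $\ell q^{t}$ of dimension $t$, $q^{\lceil(t+r)/2\rceil}$ of dimension $\lfloor(t+r)/2\rfloor$, and one of dimension $\lceil(t+r)/2\rceil$. Each part $P$ of full dimension $t$ lifts \emph{cleanly} to a $t$-subspace of $W$ meeting $U$ trivially, namely a complement of $U$ inside the $(2t-1)$-dimensional cone $\pi^{-1}(P)$; since the images are pairwise point-disjoint in $V(n,q)$ and each lift avoids $U=\ker\pi$, a point in the intersection of two such lifts maps into $P\cap P'=\{0\}$ and hence lies in $U$, forcing it to be $0$. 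Thus these $\ell q^{t}$ lifts are automatically pairwise disjoint, accounting for the dominant term.

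The remaining $q^{\lceil(t+r)/2\rceil}+1$ members must come from the low-dimensional parts of $\Pi_M$, all of which sit inside a single $V(t+r,q)$ summand $V_0$; its cone $\pi^{-1}(V_0)$ in $W$ has dimension $(t+r)+(t-1)=2t+r-1$. Inside this cone one performs the delicate sub-construction: produce $q^{\lceil(t+r)/2\rceil}+1$ pairwise-disjoint $t$-subspaces whose uncovered points, which include part of $U$, contain no $t$-space. The exact count is then confirmed by a point-count using \eqref{eq:4} and the $\theta$-identities \eqref{rel1}--\eqref{rel2}, in the same spirit as the arguments of Sections~\ref{sec:3}--\ref{sec:4}, giving $|\mathcal S|=\ell q^{t}+q^{\lceil(t+r)/2\rceil}+1=\sigma_q(n,t)$.

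\emph{The main obstacle} is this last sub-construction together with the global maximality check. The naive idea of padding each low-dimensional part $P$ by a complement $U_P\subseteq U$ of dimension $t-\dim P$ fails, because $\sum(t-\dim P)$ exceeds $\dim U=t-1$, so the lifts cannot be made disjoint in this way; one must instead exploit the recursive structure of $\Pi_M$ coming from iterated use of Lemma~\ref{lem:Be} and interleave it with the extra $t-1$ coordinates. Simultaneously one has to guarantee that the final hole set $H$ admits no $t$-dimensional subspace, which is the genuinely combinatorial heart of the matter and is exactly the explicit construction carried out on page~610 of~\cite{Go}. Once disjointness, the exact count, and the hole criterion are all verified, $\mathcal S$ is a maximal partial $t$-spread of $V(n+t-1,q)$ of size $\sigma_q(n,t)$, whence $\tau_q(n+t-1,t)\le\sigma_q(n,t)$.
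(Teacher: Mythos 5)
Your proposal is not a proof: it explicitly defers the two essential steps --- the construction of the remaining $q^{\lceil(t+r)/2\rceil}+1$ spread members inside the cone $\pi^{-1}(V_0)$, and the verification that the resulting hole set contains no $t$-space --- to ``the explicit construction carried out on page~610 of~\cite{Go}''. Those two steps \emph{are} the content of the lemma; the part you do carry out (lifting the $\ell q^t$ parts of dimension $t$ through the quotient, with the correct disjointness argument) is the easy part. Note also that the paper itself offers no proof of this statement: it is quoted as a result of Govaerts, with the citation standing in for the argument, so a blind proof that appeals to~\cite{Go} at exactly the crux establishes nothing beyond what the citation already gives.

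Moreover, the framework you chose makes the missing steps harder than they need to be, and it is doubtful the argument can be completed in your setup as described. You work with a quotient $\pi\colon V(n+t-1,q)\to V(n,q)$ and try to lift the parts of $\Pi_M$; as you yourself observe, the low-dimensional parts cannot be lifted disjointly by padding with subspaces of $U$ (one would need $q^{\lceil(t+r)/2\rceil}+1$ pairwise disjoint subspaces of dimension $t-\lfloor(t+r)/2\rfloor$ inside the $(t-1)$-dimensional space $U$, while a partial spread of that dimension in $U$ has size at most about $q^{\lfloor(t+r)/2\rfloor-1}$). Even if the cone over $V_0$ could somehow be handled, maximality would still require ruling out $t$-spaces that straddle several cones $\pi^{-1}(P)$, and your setup offers no mechanism for that. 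The construction the paper relies on (as reflected in Lemma~\ref{lem:Go2} and in the proof of Theorem~\ref{thm:3}) is structurally different: one fixes an $n$-dimensional subspace $B$ of $V(n+t-1,q)$, which is a \emph{trivial blocking set} with respect to $t$-spaces, partitions $B$ by a minimum-size partition as in Proposition~\ref{pro:1}, and extends the parts to pairwise disjoint $t$-spaces of the ambient space using the $t-1$ extra dimensions. In that framework maximality is immediate: every $t$-space of $V(n+t-1,q)$ meets $B$ in a nonzero point, and $B$ is entirely covered, so every $t$-space meets some member nontrivially; the only real work is the disjoint-extension step. Citing that construction cannot repair your cone picture, because it is a different construction.
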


We will prove the following theorem.

\begin{theorem}\label{thm:3}
Let $n$ and $t >1 $ be integers such that $n\geq 2t$. 
Then $\tau_q(n+t-1,t)=\sigma_q(n,t)$.
\end{theorem}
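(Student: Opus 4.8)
The plan is to prove the two inequalities separately. The inequality $\tau_q(n+t-1,t)\le\sigma_q(n,t)$ is exactly the content of Lemma~\ref{lem:Go}, so the entire task is the reverse bound: I must show that every maximal partial $t$-spread $\mathcal{S}=\{W_1,\dots,W_s\}$ of $V(n+t-1,q)$ satisfies $s\ge\sigma_q(n,t)$. The first step is to recast maximality in terms of points. Let $H$ be the set of \emph{holes}, i.e.\ the $1$-dimensional subspaces lying in no $W_i$. A $t$-subspace $U$ can be adjoined to $\mathcal{S}$ exactly when $U\cap W_i=\{0\}$ for all $i$, which happens precisely when every point of $U$ is a hole. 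Hence $\mathcal{S}$ is maximal if and only if $H$ contains no $t$-dimensional subspace; equivalently, the union $\bigcup_i W_i$ of $s\theta_t$ covered points meets every $t$-subspace of $V(n+t-1,q)$. This point-translation is the only place maximality will enter.

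The strategy to reach the \emph{exact} value $\sigma_q(n,t)$ is to transfer $\mathcal{S}$ into a subspace partition of a space of dimension $n$, where the sharp lower bound already proved in Section~\ref{sec:3} applies. Since $(n+t-1)-(t-1)=n$, I would fix a $(t-1)$-dimensional subspace $T$ and pass to the quotient $\bar V=V(n+t-1,q)/T\cong V(n,q)$ with projection $\pi$, so that each $\bar W_i=\pi(W_i)$ is a subspace of $\bar V$ of dimension $t-\dim(W_i\cap T)$, and the points of $\bar V$ correspond to the $t$-subspaces of $V(n+t-1,q)$ through $T$. The goal is to choose $T$ so that the family $\Pi=\{\bar W_i\}$ is a genuine subspace partition of $\bar V$ with at most $s$ parts and largest part of dimension $t$; then Section~\ref{sec:3} yields $s\ge|\Pi|\ge\sigma_q(n,t)$, completing the proof. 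Two conditions must be engineered here. Covering fails at a point of $\bar V$ exactly when it is represented by a $t$-subspace $U\supseteq T$ all of whose points outside $T$ are holes, and two images $\bar W_i,\bar W_j$ overlap exactly when $T\cap(W_i+W_j)\ne\{0\}$. The hypothesis $n\ge 2t$ gives $\dim(W_i+W_j)+\dim T=3t-1\le n+t-1$, which is precisely the dimension room needed to force the intersections $T\cap(W_i+W_j)$ to be trivial, so that $n\ge 2t$ is exactly what makes the overlap condition controllable.

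The main obstacle is producing a single subspace $T$ that simultaneously guarantees disjointness of all the $\bar W_i$ and covering of all of $\bar V$, and this is where the real difficulty lies: the choice of $T$ must interact with the spread elements so that the resulting part-dimensions reproduce a partition type of $V(n,q)$, rather than an impossible all-$t$ ``spread'' when $r>0$. One can already see from the blocking-set reformulation of the first paragraph that a crude count gives only $s\theta_t\ge\theta_n$, hence merely $s\ge\theta_n/\theta_t$; the extra additive term $q^{\lceil(t+r)/2\rceil}$ in $\sigma_q(n,t)$ is not visible at this level and is forced only by the finer fact that the blocking set $\bigcup_i W_i$ is a union of \emph{disjoint} $t$-subspaces. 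I therefore expect the heart of the argument to be a counting step parallel to the proof of Lemma~\ref{lem:r=1} and of the bound in Section~\ref{sec:3}: extend $\mathcal{S}$ to a subspace partition of $V(n+t-1,q)$ by adjoining the holes, apply the hyperplane identities of Lemma~\ref{HeLe2} and Lemma~\ref{HeLe1}, and use the fact that $H$ contains no $t$-subspace to bound the hole contribution and thereby isolate the spread count $s$ from below. Carrying out this separation cleanly—so that it delivers the precise additive constant $q^{\lceil(t+r)/2\rceil}$ rather than an order-of-magnitude estimate—is the step I expect to require the most care.
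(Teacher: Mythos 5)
You have the right high-level strategy---dispose of $\tau_q(n+t-1,t)\le\sigma_q(n,t)$ via Lemma~\ref{lem:Go}, then transfer a minimum-size maximal partial $t$-spread $\mathcal{S}$ into a subspace partition of a space isomorphic to $V(n,q)$ so that Theorem~\ref{thm:1} applies---but the transfer itself, which is the entire content of the reverse inequality, is left unproved. Your quotient construction needs a $(t-1)$-space $T$ with $T\cap(W_i+W_j)=\{0\}$ for \emph{all} pairs, and such that the images $\bar W_i$ cover $\bar V$. Neither condition is obtainable the way you suggest. For disjointness, the dimension count $3t-1\le n+t-1$ only shows a suitable $T$ exists for a \emph{single} pair; there are ${s\choose 2}$ sums $W_i+W_j$, each with about $q^{2t-1}$ points, and with $s\approx q^{n-t}$ their total point count dwarfs that of $V(n+t-1,q)$, so no general-position or union-bound argument produces one $T$ avoiding them all. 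For covering, what you need is that every $t$-space $U\supseteq T$ lies inside some $W_i+T$, which is far stronger than maximality (maximality only says $U$ meets some $W_i$ nontrivially). Your fallback plan---adjoin the holes as parts and rerun the counting of Lemma~\ref{HeLe2}/Lemma~\ref{lem:r=1} in $V(n+t-1,q)$---is also only a sketch, and you yourself flag that extracting the exact constant $q^{\lceil(t+r)/2\rceil}$ there is the hard step; no argument is given.

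The missing ingredient is Lemma~\ref{lem:Go2} (Govaerts): for a \emph{minimum-size} maximal partial $t$-spread, the union $A=\bigcup_{W\in\mathcal{S}}W$ contains a trivial blocking set, i.e.\ an $n$-dimensional subspace $B$ of $V(n+t-1,q)$. With $B$ in hand the transfer is done by \emph{intersection}, not projection: set $\Pi_{\mathcal{S}}=\{W\cap B:\ W\in\mathcal{S}\}$. Pairwise disjointness is inherited automatically from $W\cap W'=\{0\}$, every point of $B$ is covered because $B\subseteq A$, and the blocking property gives $W\cap B\neq\{0\}$ for every $W$, so the parts are distinct, nonzero, of dimension at most $t$, and $|\Pi_{\mathcal{S}}|=|\mathcal{S}|$. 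Then Theorem~\ref{thm:1} gives $|\mathcal{S}|\ge\sigma_q(n,t)$ when some part has dimension exactly $t$, while Proposition~\ref{pro:2} (antimonotonicity of $\sigma_q(n,\cdot)$) settles the case where all parts have dimension at most $t-1$---a case your outline never addresses. Note finally that Lemma~\ref{lem:Go2} is available only for minimum-size maximal spreads, which is all one needs to bound $\tau$; your framing, that every maximal partial $t$-spread must be handled directly, demands more than the structural lemma provides.
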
 

The method employed to prove Theorem~\ref{thm:3} will be the same as was used in~\cite{NaSi} to prove 
$\tau_q(3t,t)=\sigma_q(2t+1,t)$. In particular, we will use Theorem~\ref{thm:1} in Section~\ref{sec:intro}.
We first introduce the relevant definitions and a useful Lemma due to Govaerts~\cite{Go}. 
A set of points $B$, i.e., 1-spaces of $V$, is called a {\em blocking set} with respect to the $t$-spaces of 
$V$ if $W\cap {B}\not=\{0\}$ for any $t$-space $W$ in $V$. Note that any $(n-t+1)$-dimensional subspace 
of $V$ is a blocking set with respect to the $t$-spaces of $V$. Such blocking sets are 
called {\em trivial}. The following lemma follows from the results of Govaerts (see Case~2, page~612 in~\cite{Go}). 
\begin{lemma}[Govaerts~\cite{Go}]\label{lem:Go2}
Let $n$ and $t>1$ be integers such that $n\geq 2t$. 
If $\S$ is a minimum size maximal partial $t$-spread of $V(n,q)$, 
then $\bigcup_{W\in \S}W$ contains a trivial blocking set.
\end{lemma}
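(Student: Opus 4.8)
The plan is to recast the statement in the language of blocking sets and then to drive it with a Bose--Burton type rigidity result. Let $\S$ be a minimum size maximal partial $t$-spread of $V=V(n,q)$ with $n\geq 2t$, let $\mathcal C$ be the set of points (i.e.\ $1$-spaces) lying in some member of $\S$, and let $\mathcal H$ be the complementary set of \emph{holes}. The first step, which is routine, is to note that a $t$-space $W'$ may be adjoined to $\S$ exactly when $W'\cap W=\{0\}$ for every $W\in\S$, that is, exactly when every point of $W'$ is a hole. Hence $\S$ is maximal if and only if every $t$-space contains a covered point, i.e.\ if and only if $\mathcal C$ is a blocking set with respect to the $t$-spaces of $V$. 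Since the trivial blocking sets are precisely the $(n-t+1)$-dimensional subspaces, the goal becomes to produce an $(n-t+1)$-space $U$ no point of which is a hole, equivalently $U\subseteq\mathcal C$.

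Second, I would convert the minimality of $|\S|$ into a tight size constraint on $\mathcal C$. As the members of $\S$ are pairwise disjoint, $|\mathcal C|=|\S|\,\theta_t$, so minimizing $|\S|$ is the same as minimizing $|\mathcal C|$ among those blocking sets realizable as a disjoint union of $t$-spaces. The Bose--Burton theorem supplies the unconstrained bound $|\mathcal C|\geq\theta_{n-t+1}$, with equality only when $\mathcal C$ is itself an $(n-t+1)$-subspace; on the other side, a maximal partial $t$-spread whose covered set already contains a trivial blocking set always exists (this is essentially Govaerts' construction behind Lemma~\ref{lem:Go}), which furnishes a matching upper bound on $|\S|$. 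Together these trap $|\mathcal C|$ in a narrow interval just above $\theta_{n-t+1}$, the small excess being forced by the divisibility requirement $\theta_t\mid|\mathcal C|$, which in general prevents $\mathcal C$ from being an exact subspace.

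The heart of the argument, and the step I expect to be the main obstacle, is a rigidity (stability) statement: a blocking set with respect to the $t$-spaces whose size lies in this narrow interval must contain a trivial blocking set. Bose--Burton settles only the extremal size, so here I would exploit the extra hypothesis that $\mathcal C$ is a union of pairwise disjoint $t$-spaces rather than an arbitrary point set, placing the problem within the theory of minihypers. Equivalently, I would work with the holes: maximality forces $\mathcal H$ to be free of $t$-spaces, while minimality makes $\mathcal H$ nearly as large as the largest $t$-space-free set, which by Bose--Burton is the point-complement of an $(n-t+1)$-subspace. The objective is then to show that such a near-extremal $t$-space-free set is contained in the point-complement of some $(n-t+1)$-space $U$, for then $U\subseteq\mathcal C$ as desired. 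I would attempt this by induction on $n$, using the room provided by $n\geq 2t$ to pass to a hyperplane or to the quotient by a carefully chosen point, recording how the covered $t$-spaces split across the section and invoking the counting identities of Lemma~\ref{HeLe2} to control where the holes can lie. The delicate point, where the whole argument is won or lost, is to exclude the configurations in which the holes disperse enough to block every $(n-t+1)$-space while $|\S|$ remains minimal; it is exactly the rigid disjoint-$t$-space structure of $\mathcal C$, together with the tightness of the size interval, that should rule these out and force the trivial blocking set to emerge.
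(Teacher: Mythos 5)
Your opening reductions are correct and match the right framing: maximality of $\S$ is indeed equivalent to the covered set $\mathcal C=\bigcup_{W\in\S}W$ being a blocking set with respect to the $t$-spaces, minimality of $|\S|$ pins $|\mathcal C|=|\S|\,\theta_t$ just above the Bose--Burton bound $\theta_{n-t+1}$, and the task is to extract an $(n-t+1)$-space inside $\mathcal C$. But the proof has a genuine gap at exactly the point you flag yourself: the rigidity step --- that a blocking set in this size range must contain a trivial one --- is stated as an objective, not proved. Everything after ``the heart of the argument'' is conditional (``I would attempt this by induction on $n$\,\dots''), and no mechanism is given for the decisive exclusion of hole configurations that block every $(n-t+1)$-space; the counting identities of Lemma~\ref{HeLe2} (which do apply here, since $\S$ together with the holes taken as $1$-spaces forms a subspace partition of type $(m_t,m_1)$) yield averages over hyperplanes but by themselves cannot certify that a full $(n-t+1)$-space lies in $\mathcal C$. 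Note that the paper does not prove this lemma either --- it imports it from Govaerts (Case~2, page~612 of \cite{Go}) --- and Govaerts closes precisely this gap not by a bespoke induction but via the theory of minimal blocking sets: every blocking set contains a minimal one, and known lower bounds (in the line of Bose--Burton, Beutelspacher, and Heim) on the size of \emph{nontrivial} minimal blocking sets with respect to $t$-spaces show that any minimal blocking set of size below a threshold strictly larger than $|\mathcal C|$ must be an $(n-t+1)$-subspace; since that subspace sits inside $\mathcal C$, the conclusion follows. Your plan gestures at minihypers but never invokes such a result, and without it the argument is a program, not a proof.

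There is also a quantitative wrinkle in your size trap. The matching upper bound you cite (the construction behind Lemma~\ref{lem:Go}) produces maximal partial $t$-spreads of $V(n+t-1,q)$ of size $\sigma_q(n,t)$ for $n\geq 2t$, i.e.\ it bounds $\tau_q(m,t)$ only for ambient dimension $m\geq 3t-1$. The lemma you are proving is stated for all $n\geq 2t$, so in the range $2t\leq n\leq 3t-2$ your ``narrow interval'' has no upper wall from this source, and you would need a different construction or bound (Govaerts handles the general range with bounds on $\tau_q$ that do not come from this particular construction). If you repair the main step by citing a nontrivial-minimal-blocking-set threshold, you should check that the available upper bound on $\tau_q(n,t)\,\theta_t$ stays below that threshold throughout $n\geq 2t$, not merely for $n\geq 3t-1$.
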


In the proof of Theorem~\ref{thm:3} we will also use the following proposition.

\begin{pro}\label{pro:2}
Let $d, d',$ and $n$ be integers such that $0 < d'< d \leq n/2.$ Then
\[
\sigma_q(n,d)<\sigma_q(n,d')\;.
\] 
\end{pro}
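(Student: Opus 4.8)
The plan is to prove the strict inequality $\sigma_q(n,d) < \sigma_q(n,d')$ for $0 < d' < d \le n/2$ by writing both quantities explicitly via the formula in Theorem~\ref{thm:1} and comparing their leading behavior. First I would fix $n$ and, for each dimension value $\delta \in \{d,d'\}$, introduce the corresponding quotient-and-remainder decomposition $n = k_\delta \delta + r_\delta$ with $0 \le r_\delta < \delta$ and $k_\delta \ge 2$ (the hypothesis $\delta \le n/2$ guarantees $k_\delta \ge 2$, so Theorem~\ref{thm:1} applies). Writing $\sigma_q(n,\delta) = q^{\delta + r_\delta}\sum_{i=0}^{k_\delta - 2} q^{i\delta} + q^{\lceil (\delta + r_\delta)/2\rceil} + 1$ when $1 \le r_\delta < \delta$, and handling the spread case $r_\delta = 0$ separately, reduces the claim to a comparison of explicit $q$-polynomial expressions.

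The key observation driving the comparison is that the dominant term of $\sigma_q(n,\delta)$ is governed by the exponent $n - \delta$: the largest power of $q$ appearing in the main sum $q^{\delta + r_\delta}\sum_{i=0}^{k_\delta-2} q^{i\delta}$ is $q^{\delta + r_\delta + (k_\delta - 2)\delta} = q^{(k_\delta - 1)\delta + r_\delta} = q^{n - \delta}$. Hence as $\delta$ increases the exponent $n - \delta$ strictly decreases, which should force $\sigma_q(n,d) < \sigma_q(n,d')$. The cleanest way to make this rigorous is to rewrite the main sum in closed form. Using $\sum_{i=0}^{k-2} q^{it} = \theta_{(k-1)t}/\theta_t \cdot (q-1)/(q-1)$, or more directly via the relation \eqref{rel1}, one sees that $q^{t+r}\sum_{i=0}^{k-2} q^{it}$ equals $q^t \theta_{n-t}/\theta_t$ up to the manipulations already recorded in the preliminary section; I would express everything in terms of the $\theta_i$ so that the comparison becomes a statement about the monotonicity of $q^\delta \theta_{n-\delta}/\theta_\delta$ in $\delta$.

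The main step is then to show that the function $\delta \mapsto q^\delta \theta_{n-\delta}/\theta_\delta$ is strictly decreasing on the range $1 \le \delta \le n/2$, and that the secondary term $q^{\lceil (\delta+r_\delta)/2\rceil}$ is of strictly smaller order than the gap created in the main term, so that it cannot reverse the inequality. Concretely, I would verify that the difference between the main terms at $d'$ and $d$ is at least a power of $q$ whose exponent strictly exceeds $\lceil (d + r_d)/2\rceil$, which is bounded above by roughly $(d+r_d)/2 < n/2 \le n - d$ — comfortably below the scale $q^{n-d'}$ of the main term at $d'$. The hypothesis $d \le n/2$ is exactly what keeps the secondary correction subcritical, since it bounds the exponent $\lceil (\delta+r_\delta)/2 \rceil$ by $n-\delta$.

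The part I expect to be the main obstacle is the bookkeeping across the different residue cases: because $r_d$ and $r_{d'}$ are computed from $n$ modulo two different divisors, there is no clean arithmetic relation between the two decompositions, and one must argue uniformly regardless of whether each remainder is zero or positive (mixing the two forms of the $\sigma$-formula). I would handle this by bounding each $\sigma_q(n,\delta)$ crudely from above and below in terms of the single controlling exponent $n-\delta$ — say $q^{n-\delta} \le \sigma_q(n,\delta) \le q^{n-\delta+1}$ up to lower-order terms — so that the strict separation $(n-d') - (n-d) = d - d' \ge 1$ in the exponents dominates any discrepancy introduced by the differing remainders and the secondary $q^{\lceil(\delta+r_\delta)/2\rceil}$ terms. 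Once these two-sided bounds are in place, the strict inequality follows immediately.
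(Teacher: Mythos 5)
Your overall strategy --- bracket each $\sigma_q(n,\delta)$ between two-sided bounds governed by the dominant power $q^{n-\delta}$, and let the separation of exponents do the work --- is in the same spirit as the paper's proof, but your concluding step has a genuine gap: the claimed exponent separation vanishes in exactly the case that matters, $d=d'+1$. Your brackets are $q^{n-\delta}\le\sigma_q(n,\delta)\le q^{n-\delta+1}$ ``up to lower-order terms''; for adjacent dimensions the upper endpoint of the bracket at $d$ and the lower endpoint of the bracket at $d'$ coincide, since $q^{n-d+1}=q^{n-d'}$. So the two intervals abut, there is no room left to absorb any discrepancy, and the claim that ``the strict separation $d-d'\ge1$ in the exponents dominates any discrepancy'' is simply not available: what is required is a pair of exact, strict, constant-level inequalities $\sigma_q(n,d)<q^{n-d+1}$ and $q^{n-d'}<\sigma_q(n,d')$, with no slop at all. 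These are moreover genuinely tight at $q=2$: for instance $\sigma_2(5,2)=13$ sits in the bracket $[8,16]$ and $\sigma_2(5,1)=31$ sits in the bracket $[16,32]$, which share the endpoint $16$; and in general $\theta_n/\theta_d$ alone is about $\frac{4}{3}q^{n-d}$ for $q=2$, $d=2$, while the secondary term $q^{\lceil(d+r_d)/2\rceil}$ can contribute up to another $\frac{1}{2}q^{n-d}$, so $\sigma_q(n,d)$ really does approach a constant multiple of $q^{n-d+1}$, and the factor separating consecutive exponents is exactly $q=2$. The whole proof therefore lives or dies on strictness, which the hedge ``up to lower-order terms'' gives away.

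This constant-level care is precisely what the paper's proof supplies. It reduces to consecutive dimensions (proving $\sigma_q(n,t)<\sigma_q(n,t-1)$ for $1<t\le n/2$ and chaining), and establishes the exact strict chain
\[
\frac{\theta_n}{\theta_t}\;\le\;\sigma_q(n,t)\;<\;\frac{\theta_n}{\theta_t}+q^{\beta}\;<\;2\,\frac{\theta_n}{\theta_t}\;\le\;q\,\frac{\theta_n}{\theta_t}\;<\;\frac{\theta_n}{\theta_{t-1}}\;\le\;\sigma_q(n,t-1)\;,
\]
where $\beta=\lceil(t+r)/2\rceil$, using that $q^{\beta}<\theta_n/\theta_t$ (this is where $t\le n/2$ enters) and that $\theta_t>q\theta_{t-1}$. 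Note where the strictness comes from: the factor $2$ and the factor $q$ coincide when $q=2$, so the argument survives only because the inequalities $\sigma_q(n,t)<\theta_n/\theta_t+q^\beta$ and $q\theta_n/\theta_t<\theta_n/\theta_{t-1}$ are strict. Your plan can be repaired along the same lines, even without chaining: show $\sigma_q(n,d)<2\theta_n/\theta_d$ exactly, and then use $\theta_d>q^{d-d'}\theta_{d'}$ to get $\sigma_q(n,d)<2\theta_n/\theta_d\le q^{d-d'}\theta_n/\theta_d<\theta_n/\theta_{d'}\le\sigma_q(n,d')$, the last step being the trivial point-count lower bound. But as written, your final step is not a proof, and no amount of bookkeeping over the residues $r_d$, $r_{d'}$ fixes it without replacing the exponent-domination heuristic by these exact inequalities.
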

\begin{proof} We will prove that $\sigma_q(n,t)<\sigma_q(n,t-1)$ holds, for $1<t\leq n/2$.

If $t$ divides $n$, then $\sigma_q(n,t)=\theta_n/\theta_t$. Consequently, by Theorem \ref{thm:1} 
and with the use of Equation \eqref{eq:4}, we note that it is always true that
\[
\frac{\theta_n}{\theta_t}\leq \sigma_q(n,t)<\frac{\theta_n}{\theta_{t}}+q^\beta\;,
\]
where $0\leq r=n-kt<t$ and $\beta=\lceil (t+r)/2\rceil$. 
As $\theta_t>q\theta_{t-1}$ and $q^\beta<\theta_n/\theta_t$, we thus get
\[
\sigma_q(n,t) <2\frac{\theta_n}{\theta_{t}}\leq q\frac{\theta_n}{\theta_{t}}<\frac{\theta_n}{\theta_{t-1}}\leq\sigma_q(n,t-1)\;.
\] 
\end{proof}
\begin{proof}[{\bf Theorem~\ref{thm:3}}]
By Lemma~\ref{lem:Go}, we have  $\tau_q(n+t-1,t)\leq \sigma_q(n,t)$.
So, it remains to show that 
\begin{equation}\label{eq:lb}
\tau_q(n+t-1,t)\geq \sigma_q(n,t).
\end{equation}
Let $\S$ be a minimum size maximal partial $t$-spread in $V(n+t-1,q)$. Then by 
Lemma~\ref{lem:Go2}, $A=\bigcup_{W\in \S}W$ contains a trivial blocking set. In other words, 
there exists an $n$-dimensional subspace ${B}\subseteq A$.
Let 
\[\Pi_S=\{W\cap {B}:\; W\in \S\}.\] 
Since $B$ is a blocking set with respect to $t$-spaces, we have $W\cap {B}\not=\{0\}$
for any $W\in\S$. Thus, $\Pi_\S$ is a subspace partition of ${B}\cong V(n,q)$ containing 
subspaces of dimensions at most $t$. If $\Pi_\S$ contains a $t$-subspace,
then it follows from Theorem~\ref{thm:1} and the minimality of $\S$ that 
\[\tau_q(n+t-1,t)=|\S|=|\Pi_\S|\geq \sigma_q(n,t).\]
If $\Pi_\S$ does not contain any $t$-subspace, then each subspace in $\Pi_\S$
has dimension at most $t-1$ (and contains at most $\th_{t-1}$ $1$-dimensional subspaces). 
So the theorem now follows from the fact that the function $\sigma_q(n,t)$ is antimonotone in 
$t$ by Proposition \ref{pro:2}.
\end{proof}

\section{Some remarks}

Let $\Pi$ be a subspace partition of $V=V(n,q)$ consisting of $n_i$ subspaces of dimension $d_i$, for $1\leq i\leq k$. Let us assume that $d_1<d_2<\dots<d_k$ (and $n_1n_2\cdots n_k\neq0$). In \cite{He2} a lower bound on $n_1$ was given as a function of $q$, $d_1$ and $d_2$, and, as easily verified from that result, it is always true that $n_1\geq\sigma_q(d_2,d_1)$. 
Working on the results of this paper has given us many indications that the following conjecture holds.
\begin{conj}\label{conj:1}
Let $\Pi$ be a subspace partition of $V(n,q)$ with  $n_i>0$ subspaces of dimension $d_i$,
$1\leq i \leq k$, and where $d_1<\ldots<d_k$. Then, for any integer $j$, $1\leq j<k$, we have 
\[ n_1+\ldots+n_j\geq\sigma_q(d_{j+1},d_j).\]
\end{conj}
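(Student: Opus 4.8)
The plan is to establish the bound by restricting $\Pi$ to a well-chosen $(d_{j+1})$-dimensional subspace and reading off a partition of $V(d_{j+1},q)$ whose largest member has dimension exactly $d_j$. Call a member of $\Pi$ \emph{small} if its dimension is at most $d_j$ and \emph{large} otherwise; there are $N:=n_1+\cdots+n_j$ small members. Fix a small member $W^\ast$ of dimension $d_j$ (one exists since $n_j>0$). For any $(d_{j+1})$-subspace $U$ of $V$, the collection $\Pi_U=\{W\cap U:\ W\in\Pi,\ W\cap U\neq\{0\}\}$ is a subspace partition of $U\cong V(d_{j+1},q)$, since every point of $U$ lies in a unique member of $\Pi$. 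The key point is that \emph{if $U$ can be chosen with $W^\ast\subseteq U$ and $U\cap W=\{0\}$ for every large $W$}, then every point of $U$ lies in a small member, so each part of $\Pi_U$ is the trace of a small member and therefore has dimension at most $d_j$, while the trace of $W^\ast$ equals $W^\ast$ and has dimension exactly $d_j$. Hence $\Pi_U$ is a partition of $V(d_{j+1},q)$ whose largest subspace has dimension $d_j$, so by the definition of $\sigma_q$ we get $\sigma_q(d_{j+1},d_j)\le|\Pi_U|$. Since distinct small members meet $U$ in distinct (nonzero) traces, $|\Pi_U|\le N$, and the desired inequality follows at once. (Forcing $W^\ast\subseteq U$ makes the largest trace equal to $d_j$ on the nose, so one never has to invoke the antimonotonicity of $\sigma_q$ from Proposition~\ref{pro:2}; without it one would only get the weaker bound largest dimension $\le d_j$ and would need $d_j\le d_{j+1}/2$.)

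Everything thus reduces to the existence of such a subspace $U$. Passing to the quotient $\overline V=V/W^\ast\cong V(n-d_j,q)$ and writing $\overline W$ for the image of a subspace $W$ disjoint from $W^\ast$, a short computation shows that $U=W^\ast+U'$ satisfies $U\cap W=\{0\}$ for all large $W$ as soon as the $\delta$-dimensional subspace $\overline{U'}$ (where $\delta:=d_{j+1}-d_j$) meets every $\overline W$ trivially; indeed $v\in U\cap W$ forces $\overline v\in\overline U\cap\overline W=\{0\}$, whence $v\in W^\ast\cap W=\{0\}$. So it suffices to find a $\delta$-subspace of $V(n-d_j,q)$ avoiding the images $\overline W$ of all large members of $\Pi$.

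I expect this existence step to be the main obstacle. A first remark is that it is at least dimensionally feasible whenever there are two distinct dimensions above $d_j$: since any two members of $\Pi$ are point-disjoint, the two largest members satisfy $d_k+d_{k-1}\le n$, and when $k\ge j+2$ this gives $d_k\le n-d_{k-1}\le n-d_{j+1}$, i.e. $\delta+d_k\le n-d_j$, which is exactly the condition allowing a $\delta$-subspace to miss the $d_k$-dimensional image $\overline W$. Turning feasibility into existence is the delicate part: a crude union bound over the large members, using that the number of $\delta$-subspaces of $V(n-d_j,q)$ meeting a fixed $e$-subspace trivially is $q^{e\delta}\,{n-d_j-e \brack \delta}_q$, is too lossy when the large members are numerous or of high dimension, so one will likely need to exploit that the large members are pairwise point-disjoint rather than treating them independently.

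Finally, there is one case where no avoiding subspace can exist and a different argument is forced, namely $k=j+1$ together with $d_{j+1}>n/2$. Here point-disjointness already forces $n_{j+1}=1$, so $\Pi$ consists of a single large subspace $W$ of dimension $d_{j+1}$ together with $N$ small subspaces covering the $q^{d_{j+1}}\theta_{n-d_{j+1}}$ points outside $W$. In this regime I would fall back on a direct counting argument in the spirit of the proof of Lemma~\ref{lem:r=1} and Theorem~\ref{thm:1}: apply the hyperplane-type equations of Lemma~\ref{HeLe2} to the dimension $d_j$, bound the multiplicities $m_{d_i}$ for $i\le j$, and compare against $\sigma_q(d_{j+1},d_j)$ as given by Theorem~\ref{thm:1}. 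This boundary case, rather than the generic one, is where I expect the real work to lie.
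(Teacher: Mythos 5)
First, a point of context: the paper contains no proof of this statement. It is Conjecture~\ref{conj:1}, which the authors explicitly leave open (they report only ``many indications'' that it holds). So there is no proof to compare yours against, and your attempt must stand on its own. It does not stand: the reduction it rests on is not merely delicate to complete, as you suggest, but false in general.

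The conditional half of your argument is fine: \emph{if} there is a $d_{j+1}$-dimensional subspace $U$ containing a small member $W^\ast$ of dimension $d_j$ and meeting every large member trivially, then $\Pi_U$ is a partition of $V(d_{j+1},q)$ whose largest subspace has dimension exactly $d_j$ and whose size is at most $N=n_1+\cdots+n_j$, which gives the bound. The fatal problem is the existence of $U$, and contrary to your assessment it can fail far outside the boundary case $k=j+1$, $d_{j+1}>n/2$. Take $n=2t$ with $t\geq 3$, let $\{W_0,W_1,\ldots,W_{q^t}\}$ be a $t$-spread of $V(2t,q)$, and refine $W_0$ via Lemma~\ref{lem:Be} into one subspace $A$ of dimension $t-1$ and $q^{t-1}$ subspaces of dimension $1$. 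The resulting partition has $d_1=1$, $d_2=t-1$, $d_3=t$, so for $j=1$ we have $k=3=j+2$ and $d_{j+1}=t-1<n/2$: this is squarely your ``generic'' case, and your dimensional-feasibility check passes ($\delta+d_k=2t-2\leq 2t-1=n-d_1$). Yet no avoiding $U$ exists, even if one drops the requirement $W^\ast\subseteq U$: any $(t-1)$-dimensional subspace $U$ either contains a vector outside $W_0$, hence meets some spread member $W_i$, $i\geq 1$, nontrivially (the spread covers everything outside $W_0$), or else $U\subseteq W_0$, in which case $\dim U+\dim A=2t-2>t$ forces $U\cap A\neq\{0\}$. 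Equivalently, the images of the large members cover every point of $V/W^\ast$, so no $\delta$-subspace avoids them all. This obstruction is structural, not a counting artifact, so the refinement you anticipate (exploiting pairwise disjointness of the large members rather than a union bound) cannot rescue the step. Note that the conjectured inequality itself holds comfortably in this example, with $n_1=q^{t-1}\geq\theta_{t-1}=\sigma_q(t-1,1)$, so it is your method, not the statement, that breaks. Finally, your fallback argument is tailored to the situation of a single large member ($n_{j+1}=1$) and is only a sketch; since the counterexample above has many large members, essentially the entire conjecture is left unproved by this proposal --- which is consistent with the fact that the authors themselves could not prove it.
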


Let us also remark that for $n\leq 2t-1$, the problem
of determining the minimum size $\tau_q(n+t-1,t)$ of a maximal partial $t$-spread in $V(n+t-1,q)$ is still open. For $t=2$ and $n=3$,
the following lower bound was achieved by Glynn~\cite{Gl}:
\[\tau_q(4,2) \geq 2q,\]
while the following two upper bounds are due to G\'acs and Sz\"onyi~\cite{GaSz}: 
\[
 \tau_q(4,2)\leq  (2\log_2 q+1)q+1,\; \mbox{ if $q$ odd},\]
 and 
 \[ \tau_q(4,2)\leq (6.1\ln q+1)q+1,\; \mbox{ if $q>q_0$ even.}\]
\bs

\n{\bf Acknowledgment.}
The last three authors of this paper wish to thank their host Olof Heden and the Department of Mathematics at KTH for their warm hospitality while working on this paper during their visit there. 



\bs\n O. Heden (olohed@math.kth.se), Department of Mathematics, KTH, S-100 44 Stockholm, Sweden.

\bs\n J. Lehmann  (jlehmann@math.uni-bremen.de),  
Department of Mathematics, Bremen University, Bibliothekstrasse 1 - MZH, 28359 Bremen, Germany.

\bs\n E. N\u{a}stase (nastasee@xavier.edu): Department of Mathematics and Computer Science, Xavier University,  3800 Victory Parkway, Cincinnati, Ohio 45207.

\bs\n P. Sissokho (psissok@ilstu.edu): Mathematics Department, Illinois State University, Normal, Illinois 61790.
\end{document}